\newcommand{\hcirc}{\accentset{\circ}{h}}
\newtheorem{prop}{Proposition}
\newtheorem{thm}[prop]{Theorem}
\newtheorem{lem}[prop]{Lemma}
\newtheorem{rema}[prop]{Remark}
\newtheorem{RK}[prop]{Remark}
\DeclareMathSymbol{\lsb@l}{\mathalpha}{letters}{`l}
\def\XXint#1#2#3{{\setbox0=\hbox{$#1{#2#3}{\int}$ }
		\vcenter{\hbox{$#2#3$ }}\kern-.6\wd0}}
\title[Inverse mean curvature flow and  Ricci-pinched  three-manifolds]{Inverse mean curvature flow and  Ricci-pinched  three-manifolds}
\author{Gerhard Huisken}
\address{\textnormal{Gerhard Huisken  \newline \indent
		University of Tübingen \newline \indent
		Department of Mathematics  \newline \indent
	Auf der Morgenstelle 10  \newline \indent 72076 Tübingen,	Germany\newline\indent 
		\href{mailto:gerhard.huisken@uni-tuebingen.de}{gerhard.huisken@uni-tuebingen.de}}
}
\author{Thomas Koerber}
\address{\textnormal{Thomas Koerber  \newline \indent
		University of Vienna \newline \indent
		Faculty of Mathematics  \newline \indent
		Oskar-Morgenstern-Platz 1 \newline \indent 1090 Vienna,	Austria \newline\indent 
		 \href{https://orcid.org/0000-0003-1676-0824}{https://orcid.org/0000-0003-1676-0824} \newline \indent
		  \href{mailto:thomas.koerber@univie.ac.at}{thomas.koerber@univie.ac.at}}
}
\begin{document}
\maketitle
\begin{abstract}
	Let $(M,g)$ be a noncompact, connected, complete Riemannian three-manifold with nonnegative Ricci curvature satisfying $Ric\geq\varepsilon\,\operatorname{tr}(Ric)\,g$ for some $\varepsilon>0$. In this note, we give a new proof based on inverse mean curvature flow that $(M,g)$ is either flat or has non-Euclidean volume growth. In conjunction with the work of J.~Lott \cite{lott} and of M.-C.~Lee and P.~Topping \cite{leetopping}, this gives an alternative proof of a conjecture of R.~Hamilton recently proven by A.~Deruelle, F.~Schulze, and M.~Simon \cite{deruelleschulzesimon} using Ricci flow.
\end{abstract}
	\date{\today}
	\onehalfspacing
	\section{Introduction}
		Let $(M,g)$ be a noncompact, connected, complete Riemannian three-manifold with nonnegative Ricci curvature.  Recall that $(M,g)$ is called Ricci-pinched if  there is $\varepsilon>0$ such that 
		\begin{align} \label{pinching} 
			Ric\geq \varepsilon \,R\, g.
		\end{align} 
	Here, $Ric$ and $R$ denote the Ricci curvature and the scalar curvature of $(M,g)$, respectively. The following theorem has been conjectured by R.~Hamilton 	\cite[Conjecture 3.39]{chowluni} and by J.~Lott \cite[Conjecture 1.1]{lott}. It has been proven by A.~Deruelle, F.~Schulze, and M.~Simon  \cite[Theorem 1.3]{deruelleschulzesimon} under the additional assumption that $(M,g)$ has bounded curvature. M.-C.~Lee and P.~Topping \cite[Theorem 1.2]{leetopping} have subsequently shown that this additional assumption can be dispensed with.
	\begin{thm}[{\cite[Theorem 1.2]{leetopping}}] \label{main result 0}
		Let $(M,g)$ be a noncompact, connected, complete Riemannian three-manifold that is Ricci-pinched. Then $(M,g)$ is flat.  
	\end{thm}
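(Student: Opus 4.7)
The plan is to prove Theorem \ref{main result 0} by partitioning the Ricci-pinched manifolds $(M,g)$ according to whether the asymptotic volume ratio $\mathrm{AVR}(M,g)$ is positive or zero. If $\mathrm{AVR}(M,g) = 0$, i.e., $(M,g)$ has non-Euclidean volume growth, the theorem follows from the work of Lott \cite{lott} and Lee-Topping \cite{leetopping}. The main contribution, based on inverse mean curvature flow (IMCF), is to handle the remaining case $\mathrm{AVR}(M,g) > 0$.

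Assuming $\mathrm{AVR}(M,g) > 0$, I would first apply the Cheeger-Gromoll splitting theorem (valid since $Ric \geq 0$): either $(M,g)$ splits isometrically as $\mathbb{R}\times N$, in which case the pinching condition immediately forces flatness, or $(M,g)$ has a single end. In the latter case, I would set up a proper weak IMCF $(\Sigma_t)_{t\geq 0}$ in the sense of Huisken-Ilmanen, started from an arbitrarily small geodesic sphere centered at some $p\in M$. The level sets satisfy $|\Sigma_t| = e^t|\Sigma_0|$, and Geroch monotonicity (valid because $R\geq 0$) ensures that the Hawking mass
\[
m_H(\Sigma_t) = \sqrt{\frac{|\Sigma_t|}{16\pi}}\left(1 - \frac{1}{16\pi}\int_{\Sigma_t} H^2 \, d\mu\right)
\]
is non-decreasing. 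Since the initial mass tends to zero as the starting sphere shrinks to $p$, one obtains $m_H(\Sigma_t)\geq 0$ for all $t\geq 0$.

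The heart of the argument is to show $\lim_{t\to\infty} m_H(\Sigma_t) \leq 0$. The positivity of $\mathrm{AVR}(M,g)$ together with $Ric \geq 0$ yields tangent cones at infinity by Cheeger-Colding; the scale-invariant pinching $Ric\geq\varepsilon R\, g$ should restrict any such tangent cone to have a cross-section of constant sectional curvature one, so that the asymptotic geometry is Euclidean $\mathbb{R}^3$. This would force large IMCF level sets to approximate Euclidean round spheres, giving $\int_{\Sigma_t} H^2 \to 16\pi$ and hence $\lim_{t\to\infty} m_H(\Sigma_t) = 0$. Combined with the monotonicity and non-negativity, this forces $m_H\equiv 0$ along the flow. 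The equality case of Geroch's formula then forces each $\Sigma_t$ to be totally umbilical with constant mean curvature and $R_M|_{\Sigma_t}=0$; since the $\Sigma_t$ exhaust $M$, we conclude $R\equiv 0$, hence $Ric\equiv 0$ by the pinching, and so $(M,g)$ is flat in dimension three.

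The principal obstacle is the asymptotic analysis of $\int_{\Sigma_t} H^2$ along the weak flow: the weak IMCF is only Lipschitz and may jump across minimizing hulls, so Geroch monotonicity and its equality case must be handled with care, and the pinching condition does not pass to metric limits a priori. A cleaner alternative — and probably the technical core of the paper — would be a direct monotonicity argument that uses the full strength of $Ric\geq\varepsilon R\,g$ to augment Geroch's inequality along the flow itself, thereby extracting the $\int H^2\to 16\pi$ asymptotic without having to invoke Cheeger-Colding at infinity. This is precisely where the pinching enters essentially, beyond mere non-negativity of Ricci curvature.
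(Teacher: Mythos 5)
Your outer structure matches the paper: the case $\operatorname{AVR}=0$ is delegated to Lott and Lee--Topping, and the new content is an IMCF argument when $\operatorname{AVR}>0$. But the core of your IMCF argument has a genuine gap, and in fact points in the wrong direction. You want to show $\lim_{t\to\infty}m_H(\Sigma_t)\leq 0$ by arguing via Cheeger--Colding that the tangent cone at infinity is Euclidean $\mathbb{R}^3$, so that $\int_{\Sigma_t}H^2\to 16\pi$. This step is unjustified and essentially circular: the pinching condition does not obviously pass to Gromov--Hausdorff limits, and a cone cross-section of constant curvature one (i.e.\ a Euclidean tangent cone) is equivalent to $\operatorname{AVR}=1$, which by Bishop--Gromov rigidity already gives the conclusion you are trying to prove. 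Worse, if $0<\operatorname{AVR}<1$ the expected asymptotics for IMCF in a manifold with $Ric\geq 0$ give $\int_{\Sigma_t}H^2\to 16\pi\,\operatorname{AVR}<16\pi$, and since $|\Sigma_t|=e^t|\Sigma_0|\to\infty$ the Hawking mass then tends to $+\infty$; so the inequality $\lim m_H\leq 0$ you need is simply false in the scenario you must rule out, and no contradiction with Geroch monotonicity arises. The pinching has to enter quantitatively along the flow, not through the asymptotic geometry.

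This is exactly what the paper does, and it is the piece missing from your proposal (you gesture at it in your last paragraph as a ``cleaner alternative'' but do not carry it out). The paper works with the Willmore energy $t\mapsto\int_{\Sigma_t}H^2\,\mathrm{d}\mu$ rather than the Hawking mass: the standard Geroch-type computation gives
\begin{align*}
\int_{\Sigma_0}H^2\,\mathrm{d}\mu\;\geq\;\int_{\Sigma_t}H^2\,\mathrm{d}\mu+2\int_0^t\int_{\Sigma_s}Ric(\nu,\nu)+|\hcirc|^2\,\mathrm{d}\mu\,\mathrm{d}s,
\end{align*}
and the key new lemma says that for a genus-zero (connected, by Zhu's theorem and Huisken--Ilmanen) level set the pinching yields $2\int Ric(\nu,\nu)\,\mathrm{d}\mu\geq\varepsilon\,(16\pi-\int H^2\,\mathrm{d}\mu)$, while for genus $\geq 1$ one has $2\int Ric(\nu,\nu)+|\hcirc|^2\,\mathrm{d}\mu\geq\int H^2\,\mathrm{d}\mu$. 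Starting the flow from the minimizing hull of a small geodesic ball around a point with $R>0$, where $\int H^2<16\pi$, these estimates force $\int_{\Sigma_t}H^2\,\mathrm{d}\mu\to 0$, which contradicts the sharp Willmore-type inequality of Agostiniani--Fogagnolo--Mazzieri, $\int_\Sigma H^2\,\mathrm{d}\mu\geq 16\pi\,\operatorname{AVR}>0$. So the contradiction comes from driving the Willmore energy to zero, not from driving the level sets to round spheres; without that quantitative use of $Ric\geq\varepsilon R\,g$ along the flow, your argument does not close.
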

\begin{rema}
Previous results in the direction of	 Theorem \ref{main result 0} have been obtained by B.-L.~Chen and X.-P.~Zhu \cite[Main Theorem II]{chenzhu} and by J.~Lott
\cite[Theorem 1.4]{lott}.
\end{rema}
\begin{rema}
R.~Hamilton has shown that every compact, connected, complete  Riemannian three-manifold that is Ricci-pinched is either flat or smoothly isotopic to a spherical space form; see \cite[Main Theorem]{hamilton} and the discussion on \cite[p.~4]{deruelleschulzesimon}.
\end{rema}
To describe the contributions in \cite{lott,deruelleschulzesimon,leetopping}, let $p\in M$ and
$$
\operatorname{AVR}=\frac{3}{4\,\pi}\,\lim_{r\to\infty} \frac{|B_r(p)|}{r^3}
$$
be the  asymptotic volume ratio of $(M,g)$. Note that, by the Bishop-Gromov theorem, $\operatorname{AVR}$ is well-defined, independent of $p$, and satisfies $\operatorname{AVR}\in[0,1]$. J.~Lott \cite{lott} has shown that if $(M,g)$ is Ricci-pinched and has bounded curvature,  then there exists a smooth, Ricci-pinched solution of Ricci flow coming out of $(M,g)$. By performing a detailed asymptotic analysis of this flow, J.~Lott has proven that, if $(M,g)$ is not flat, then $(M,g)$ has positive asymptotic volume ratio; see \cite[Corollary 1.7]{lott}. Subsequently, M.-C.~Lee and P.~Topping \cite{leetopping} have shown that the assumption that $(M,g)$ has bounded curvature can be dispensed with. By contrast, if $(M,g)$ has positive asymptotic volume ratio, A.~Deruelle, F.~Schulze, and M.~Simon \cite[Lemma 8.2]{deruelleschulzesimon} have observed that the asymptotic cone of $(M,g)$ is a three-dimensional Alexandrov space with nonnegative curvature. Moreover, the previous work of A.~Deruelle \cite{Deruelle} respectively of F.~Schulze and M.~Simon \cite{SchulzeSimon} implies the existence  of an expanding
soliton solution with nonnegative curvature coming out of  the asymptotic  cone of $(M,g)$.  Using their stability result \cite[Theorem 1.2]{deruelleschulzesimon}  to compare this solution with the Ricci-pinched solution constructed by J.~Lott \cite{lott}, they have concluded that $(M,g)$ is in fact flat.
 \\ 
\indent  
 The goal of this paper is to provide a new  proof based on inverse mean curvature flow of Theorem \ref{main result 0} under the additional assumption that $(M,g)$ has positive asymptotic volume ratio.  
 \begin{thm} \label{main result} 
 	Let $(M,g)$ be a noncompact, connected, complete Riemannian three-manifold that is Ricci-pinched. If $(M,g)$ has positive asymptotic volume ratio, then $(M,g)$ is isometric to flat $\mathbb{R}^3$.
 \end{thm}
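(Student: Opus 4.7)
The plan is to apply Geroch's monotonicity formula to inverse mean curvature flow (IMCF) starting from a small geodesic sphere $\partial B_{r_0}(p)$ centered at a point $p\in M$, and then pass to the limit $r_0 \to 0^+$. Since pinching gives $\mathrm{Ric}\geq 0$ and $(M,g)$ is complete and non-compact, the Huisken--Ilmanen weak IMCF $\{\Sigma_t\}_{t\geq 0}$ exists for all time. Along the smooth portion of the flow, Geroch's identity
$$\frac{d}{dt}\int_{\Sigma_t} H^2\,d\mu = -\int_{\Sigma_t}\left[\frac{2\,|\nabla H|^2}{H^2} + (\lambda_1-\lambda_2)^2 + 2\,\mathrm{Ric}(\nu,\nu)\right] d\mu$$
shows that the Willmore energy $\int H^2\,d\mu$ is non-increasing in $t$.

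Next I would pin down the endpoint values. As $r_0\to 0$, the smoothness of $(M,g)$ at $p$ forces $\int_{\Sigma_0} H^2\,d\mu \to 16\pi$. As $t\to\infty$, the positive AVR assumption should allow one to identify the large-scale behavior of $\Sigma_t$ with spherical slices of the asymptotic cone of $(M,g)$; since such a slice in a metric cone of asymptotic volume ratio $\alpha^2$ has $\int H^2\,d\mu = 16\pi\alpha^2$, one expects $\lim_{t\to\infty}\int_{\Sigma_t} H^2\,d\mu = 16\pi\,\operatorname{AVR}$. Integrating Geroch's identity and invoking pinching via $\mathrm{Ric}(\nu,\nu)\geq \varepsilon\,R$ then produces the finite upper bound
$$\int_0^\infty \int_{\Sigma_t} R\,d\mu\,dt \leq \frac{8\pi(1-\operatorname{AVR})}{\varepsilon}. \qquad(\star)$$

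For a matching lower bound, I would observe that each topologically spherical $\Sigma_t$ satisfies, by the Gauss equation and Gauss--Bonnet,
$$\int_{\Sigma_t}\bigl(R - 2\,\mathrm{Ric}(\nu,\nu)\bigr)\,d\mu = 8\pi - \tfrac{1}{2}\int_{\Sigma_t} H^2\,d\mu + \tfrac{1}{2}\int_{\Sigma_t}(\lambda_1-\lambda_2)^2\,d\mu.$$
Combined with the pinching consequence $R - 2\,\mathrm{Ric}(\nu,\nu)\leq (1-2\varepsilon)\,R$ (where $1-2\varepsilon>0$ since $\varepsilon\leq 1/3$) and the non-negativity of $\int(\lambda_1-\lambda_2)^2\,d\mu$, this yields the pointwise-in-$t$ lower bound
$$\int_{\Sigma_t} R\,d\mu \geq \frac{16\pi - \int_{\Sigma_t} H^2\,d\mu}{2(1-2\varepsilon)}.$$
Assuming for contradiction $\operatorname{AVR}<1$, the right-hand side is bounded below by a strictly positive constant for all $t$ large enough, forcing $\int_0^\infty \int_{\Sigma_t} R\,d\mu\,dt = +\infty$ in contradiction with $(\star)$. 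Hence $\operatorname{AVR} = 1$, and the rigidity case of the Bishop--Gromov inequality (together with $\mathrm{Ric}\geq 0$) forces $(M,g)$ to be isometric to flat $\mathbb{R}^3$.

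The main obstacle will be the rigorous justification of $\lim_{t\to\infty}\int_{\Sigma_t} H^2\,d\mu = 16\pi\,\operatorname{AVR}$, which requires fine control of the weak IMCF at infinity: one must establish, after rescaling, the convergence of $\Sigma_t$ to spherical slices of the asymptotic cone despite the possible jumps of the weak flow and the merely Alexandrov regularity of that cone. A secondary technical point is ensuring that the Gauss--Bonnet computation is valid along the weak flow, which may require tracking the Euler characteristic of the evolving surfaces across jump times in the Huisken--Ilmanen framework, as well as handling the point-initial limit $r_0\to 0$ carefully so that the asymptotic value $16\pi$ at $t=0$ is attained by suitable monotone convergence.
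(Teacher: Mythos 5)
Your core mechanism (weak IMCF, Geroch monotonicity, the pinching inequality $Ric(\nu,\nu)\geq\varepsilon R$, and the Gauss--Bonnet identity on genus-zero leaves) is exactly the engine of the paper's proof, but your endgame has a genuine gap. Your contradiction hinges on the claim $\lim_{t\to\infty}\int_{\Sigma_t}H^2\,\mathrm{d}\mu=16\,\pi\,\operatorname{AVR}$, and specifically on the \emph{upper} bound $\limsup_{t\to\infty}\int_{\Sigma_t}H^2\,\mathrm{d}\mu\leq 16\,\pi\,\operatorname{AVR}$, since that is what makes $16\,\pi-\int_{\Sigma_t}H^2\,\mathrm{d}\mu$ uniformly positive when $\operatorname{AVR}<1$. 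This is precisely the hard sharp-constant asymptotic statement (convergence of the weak flow to round slices of the asymptotic cone), which you flag as the ``main obstacle'' but do not prove; as it stands the argument is not complete. The paper avoids this entirely by only using the \emph{easy} direction, namely the inequality $\int_{\Sigma}H^2\,\mathrm{d}\mu\geq 16\,\pi\,\operatorname{AVR}$ valid for every closed surface, which is a citable theorem of Agostiniani--Fogagnolo--Mazzieri. The structure there is: non-flatness gives a point with $R>0$, hence (by Mondino's expansion and passing to the minimizing hull) an outward-minimizing initial surface with a \emph{strict} gap $\int H^2<16\,\pi$; monotonicity preserves this gap, the pinching/Gauss--Bonnet estimate then gives a uniform positive dissipation rate, so $\int_{\Sigma_t}H^2\,\mathrm{d}\mu\to 0$; this contradicts the Agostiniani--Fogagnolo--Mazzieri lower bound $16\,\pi\,\operatorname{AVR}>0$. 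Your own estimates $(\star)$ and the genus-zero lower bound already contain everything needed for this simpler route; you only need to restructure the contradiction so that the unproven limit identity is never used, and then no $r_0\to 0$ limiting procedure is needed either.

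Two secondary points. First, your assertion that the Huisken--Ilmanen weak flow ``exists for all time'' because $Ric\geq 0$ and $M$ is complete non-compact is not justified: a \emph{proper} weak solution need not exist under $Ric\geq 0$ alone (flat $\mathbb{R}\times S^1\times S^1$ admits none, as noted in the paper's appendix); the paper invokes Mari--Rigoli--Setti, whose existence result uses the positive asymptotic volume ratio. Second, you assume each $\Sigma_t$ is a topological sphere. Connectedness follows from $M\cong\mathbb{R}^3$ (Zhu) together with Huisken--Ilmanen's Lemma 4.2, but genus zero does not; the paper sidesteps this with a two-case estimate in which genus $\geq 1$ yields the even stronger dissipation bound $2\int_{\Sigma}Ric(\nu,\nu)+|\hcirc|^2\,\mathrm{d}\mu\geq\int_{\Sigma}H^2\,\mathrm{d}\mu$, so the argument runs without knowing the genus. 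One would also need to justify Geroch's formula for the weak flow (via the $W^{2,2}\cap C^{1,1}$ regularity of the leaves and the treatment of jump times), which the paper does by citing Huisken--Ilmanen \S 5 and Heidusch rather than by a smooth-flow computation.
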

 \begin{rema}
In conjunction with the results of J.~Lott \cite{lott} and of M.-C.~Lee and P.~Topping \cite{leetopping}, Theorem \ref{main result} gives a new proof of Theorem \ref{main result 0}.
 \end{rema}
\begin{rema}
Our technique extends to the case where	$(M,g)$ has superquadratic volume growth and the volume of geodesic unit balls in $(M,g)$ is noncollapsed; see Theorem \ref{main result 2}. 
\end{rema}
	We now outline the proof of Theorem \ref{main result}. Suppose, for a contradiction, that $(M,g)$ is not flat. Since $(M,g)$ has nonnegative Ricci curvature and is Ricci-pinched,  the scalar curvature of $(M,g)$ must be strictly positive at one point. It follows that there is a closed, connected, outward-minimizing surface $\Sigma\subset M$ such that
	\begin{align} \label{intro initial willmore bound}
	\int_\Sigma H^2\,\mathrm{d}\mu<16\,\pi.
	\end{align} 
	Here, $\mathrm{d}\mu$ and $H$ denote the area element and the mean curvature of $\Sigma$, respectively. Recall that a nested family $\{E_t\}_{t=0}^\infty$ of precompact, open sets $E_t\subset M$ with smooth and strictly mean-convex boundary $\partial E_t$ flows by inverse mean curvature flow if 
	$$
	\frac{dx}{dt}=H^{-1}\,\nu.
	$$
	Here, $x$ and $\nu$ are the position and the  outward normal of $\partial E_t$, respectively.  Using that $(M,g)$ has positive asymptotic volume ratio, the work of K.~Xu \citeauthor{xu2023isoperimetry} shows that there exists a weak solution $\{E_t\}_{t=0}^\infty$ of inverse mean curvature flow  with $\partial^*E_0=\Sigma$ in the sense of the work of T.~Ilmanen and the first-named author \cite{huiskenilmanen}.  Here, $\partial^*$ denotes the reduced boundary. Using \eqref{intro initial willmore bound} and that $(M,g)$ is Ricci-pinched, we show that
	$$
	\lim_{t\to\infty} \int_{\partial^* E_t}H^2\,\mathrm{d}\mu=0. 
	$$ 
	By contrast, the work of V.~Agostiniani, M.~Fogagnolo, and L.~Mazzieri \cite{agostinianifogagnolomazzierei} implies that, for every $t\geq 0$,
	\begin{align} \label{willmore lower bound} 
	\int_{\partial^* E_t}H^2\,\mathrm{d}\mu\geq 16\,\pi\,\operatorname{AVR},
	\end{align} 
	a contradiction; see also the work of X.~Wang \cite{wang} for an alternative proof of \eqref{willmore lower bound}.
\subsection*{Acknowledgments} The second-named author acknowledges the support of the Lise-Meitner-Project M3184 of the Austrian Science Fund. This work originated during the authors’ visit to the Hebrew University during the first-named author's Mark Gordon Distinguished Visiting Professorship. The authors thank the Hebrew University for its hospitality. The authors thank Or Hershkovits, Marco Pozetta, and Miles Simon for helpful discussions. The authors thank the referees for the valuable comments and corrections.
\section{Proof of Theorem \ref{main result}}
In this section, we assume that $(M,g)$ is a noncompact, connected, complete Riemannian three-manifold with nonnegative Ricci curvature satisfying \eqref{pinching} for some $\varepsilon>0$.\\ \indent
The goal of this section is to prove Theorem \ref{main result}.
\\ \indent We recall the following result of S.-H.~Zhu \cite{zhu}, which extends previous work of R.~Schoen and S.-T.~Yau \cite[Theorem 3]{SchoenYau}.
\begin{prop} \label{toplogy not flat} 
	 If $(M,g)$ is not flat, then $(M,g)$ is diffeomorphic to $\mathbb{R}^3$.
\end{prop}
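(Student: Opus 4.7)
The plan is to combine the Cheeger--Gromoll splitting theorem with the minimal-surface techniques of Schoen--Yau, using the pinching \eqref{pinching} crucially to rule out degenerate configurations.

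First, I would analyze the end structure of $(M,g)$ via the Cheeger--Gromoll splitting theorem. Since $(M,g)$ has non-negative Ricci curvature, the existence of a line would force an isometric splitting $(M,g)=(N,g_N)\times\mathbb{R}$, where $(N,g_N)$ is a complete surface of non-negative Gauss curvature $K$. Testing \eqref{pinching} along the direction $\partial_t$ tangent to the $\mathbb{R}$-factor yields $0=Ric(\partial_t,\partial_t)\geq 2\varepsilon K$, so $K\equiv 0$ and $(M,g)$ is flat, contradicting our assumption. Hence $(M,g)$ contains no line; in particular $(M,g)$ has exactly one end.

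Second, I would rule out $\pi_2(M)\neq 0$ by a stable minimal sphere argument. If $\pi_2(M)\neq 0$, the existence theory of Meeks--Simon--Yau produces an embedded stable minimal 2-sphere $\Sigma\subset M$. Substituting $\phi\equiv 1$ into the stability inequality and using $Ric\geq 0$ forces $|A|\equiv 0$ and $Ric(\nu,\nu)\equiv 0$ along $\Sigma$. The pinching \eqref{pinching} then gives $R\equiv 0$ and $Ric\equiv 0$ on $\Sigma$; a short calculation in the orthonormal frame $\{e_1,e_2,\nu\}$ shows that all ambient sectional curvatures vanish along $\Sigma$. By the Gauss equation for a totally geodesic surface in a flat locus, $K_\Sigma\equiv 0$, contradicting Gauss--Bonnet on the sphere.

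Third, I would treat $\pi_1(M)$ by passing to the universal cover $\tilde M$. It is a complete, non-compact, non-flat three-manifold inheriting non-negative Ricci curvature and the pinching \eqref{pinching}. Step 1 applied to $\tilde M$ shows $\tilde M$ contains no line. If $\pi_1(M)$ were non-trivial, the deck transformation action --- combined with the one-end structure from Step 1 --- would yield either multiple ends in $\tilde M$ or a sequence of distance-realizing arcs between a base point and its orbit from which one could extract a line in $\tilde M$, in either case contradicting Step 1 applied to $\tilde M$. Hence $M$ is simply connected. Combining with Step 2 and standard three-manifold topology, the simply connected, one-ended, non-compact three-manifold $M$ with $\pi_2(M)=0$ is diffeomorphic to $\mathbb{R}^3$.

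The main obstacle is Step 3: producing a line in $\tilde M$ from an infinite fundamental group, since the deck transformation action is not cocompact (because $M$ is non-compact), so the naive extraction of a line via a fundamental domain is unavailable. One must exploit both the precise one-end structure from Step 1 and the geometric rigidity provided by the pinching to reduce to the Cheeger--Gromoll setting on $\tilde M$.
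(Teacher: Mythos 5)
Your proposal attempts to re-prove from scratch what the paper simply quotes: the paper's argument is two lines long. Since $(M,g)$ is three-dimensional, the full curvature tensor is determined by $Ric$, so non-flatness together with $Ric\geq 0$ gives a point where $R>0$, and then \eqref{pinching} gives a point $p$ with $Ric(p)>0$; the conclusion is then exactly the theorem of S.-H.~Zhu \cite{zhu} (extending \cite[Theorem 3]{SchoenYau}) that a complete open three-manifold with quasi-positive Ricci curvature is diffeomorphic to $\mathbb{R}^3$. There is nothing wrong with trying to give a self-contained proof instead, but as written your sketch has genuine gaps, and the decisive one is the last step: ``simply connected, one-ended, non-compact three-manifold with $\pi_2(M)=0$ is diffeomorphic to $\mathbb{R}^3$'' is false as a statement of three-manifold topology. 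The Whitehead manifold is open, one-ended, contractible (so simply connected with trivial $\pi_2$), yet not homeomorphic to $\mathbb{R}^3$. To conclude one needs in addition simple connectivity at infinity (the Stallings--Edwards characterization of $\mathbb{R}^3$), and establishing that from the curvature hypotheses is precisely the hard geometric content of the Schoen--Yau and Zhu proofs; it does not follow from your Steps 1--3.

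Two further points. In Step 2, the Meeks--Simon--Yau existence of an embedded stable minimal sphere in a non-trivial $\pi_2$-class is a statement for compact manifolds (or in the presence of barriers); in a complete non-compact manifold with only $Ric\geq 0$ an area-minimizing sequence can escape to infinity, so $\pi_2(M)\neq 0$ does not immediately produce the stable sphere you use. And Step 3 is, by your own admission, only a hope: extracting a line in the universal cover from an infinite deck group fails without cocompactness, and you do not supply a substitute argument. So the proposal is not a complete proof; either fill in simple connectivity at infinity, the minimal-sphere existence, and the fundamental-group step along the lines of \cite{SchoenYau} and \cite{zhu}, or do as the paper does and reduce to Zhu's theorem by observing that non-flatness plus \eqref{pinching} forces $Ric>0$ at some point.
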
 
\begin{proof}
 Using \eqref{pinching}, we see that there is $p\in M$ with $Ric(p)>0$. The assertion follows from \cite{zhu}.
\end{proof}
	Let $\Sigma\subset M$ be a closed, connected surface with area measure $\mathrm{d}\mu,$  designated normal $\nu$, and mean curvature $H$ with respect to $\nu$. In Lemma \ref{willmore est} below, $\hcirc$ denotes the traceless second fundamental form of $\Sigma$.
\begin{lem} \label{willmore est} 
If  $\operatorname{genus}(\Sigma)\geq 1$, there holds
	\begin{align} \label{genus 1}  
	2\,\int_{\Sigma}Ric(\nu,\nu)+|\hcirc|^2\,\mathrm{d}\mu\geq \int_{\Sigma} H^2\,\mathrm{d}\mu 
	\end{align} 
	 and, if $\operatorname{genus}(\Sigma)=0$, there holds
	\begin{align} \label{genus 0}  
		2\,\int_{\Sigma}Ric(\nu,\nu)\,\mathrm{d}\mu\geq\varepsilon\,\bigg(16\,\pi-\int_{\Sigma}H^2\,\mathrm{d}\mu\bigg).
	\end{align}  
\end{lem}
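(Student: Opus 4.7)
The plan is to combine the Gauss equation with the Gauss--Bonnet theorem, and then exploit either non-negativity of $Ric$ (in the genus $\geq 1$ case) or the pinching condition \eqref{pinching} (in the genus $0$ case).

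First, I would use the Gauss equation for a hypersurface in a $3$-manifold. Letting $\{e_1,e_2\}$ be an orthonormal tangent frame on $\Sigma$, the intrinsic sectional curvature satisfies
\begin{align*}
K_\Sigma = K^M(T\Sigma) + \det(h) = K^M(T\Sigma) + \tfrac{1}{4}\,H^2 - \tfrac{1}{2}\,|\hcirc|^2,
\end{align*}
where I used the standard identity $\det(h)=\tfrac{1}{2}(H^2-|h|^2)$ and the decomposition $|h|^2=|\hcirc|^2+\tfrac{1}{2}H^2$ valid in dimension $2$. Integrating and applying Gauss--Bonnet yields
\begin{align*}
\int_\Sigma H^2\,\mathrm{d}\mu = 16\,\pi\,(1-\operatorname{genus}(\Sigma)) - 4\int_\Sigma K^M(T\Sigma)\,\mathrm{d}\mu + 2\int_\Sigma|\hcirc|^2\,\mathrm{d}\mu.
\end{align*}

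Next, I would rewrite the ambient sectional curvature in terms of $Ric$ and $R$. Since in a three-manifold $R=2\,K^M(T\Sigma)+2\,Ric(\nu,\nu)$, substituting gives the key identity
\begin{align*}
\int_\Sigma H^2\,\mathrm{d}\mu = 16\,\pi\,(1-\operatorname{genus}(\Sigma)) + 2\int_\Sigma\bigl[2\,Ric(\nu,\nu)-R\bigr]\,\mathrm{d}\mu + 2\int_\Sigma|\hcirc|^2\,\mathrm{d}\mu.
\end{align*}

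For the genus $\geq 1$ case, the topological term $16\,\pi\,(1-\operatorname{genus}(\Sigma))$ is non-positive. Since $Ric\geq 0$, I have $R=Ric(e_1,e_1)+Ric(e_2,e_2)+Ric(\nu,\nu)\geq Ric(\nu,\nu)$, so that $2\,Ric(\nu,\nu)-R\leq Ric(\nu,\nu)$. Plugging this in yields \eqref{genus 1} immediately.

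For the genus $0$ case, the identity rearranges to
\begin{align*}
16\,\pi - \int_\Sigma H^2\,\mathrm{d}\mu = 2\int_\Sigma R\,\mathrm{d}\mu - 4\int_\Sigma Ric(\nu,\nu)\,\mathrm{d}\mu - 2\int_\Sigma|\hcirc|^2\,\mathrm{d}\mu \leq 2\int_\Sigma R\,\mathrm{d}\mu.
\end{align*}
Here the pinching assumption $Ric\geq\varepsilon R\,g$ enters decisively: applied with unit vector $\nu$, it gives $R\leq\varepsilon^{-1}Ric(\nu,\nu)$ pointwise on $\Sigma$, and integrating gives precisely \eqref{genus 0}. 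There is no real obstacle here; the only thing to watch is keeping the factors of $2$ and the sign conventions straight when passing from $|h|^2$ to $|\hcirc|^2$ and from $K^M(T\Sigma)$ to $R-2\,Ric(\nu,\nu)$.
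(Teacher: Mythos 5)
Your proof is correct and follows essentially the same route as the paper: the identity $\int_\Sigma H^2\,\mathrm{d}\mu=16\pi(1-\operatorname{genus}(\Sigma))+2\int_\Sigma|\hcirc|^2\,\mathrm{d}\mu+\int_\Sigma 4\,Ric(\nu,\nu)-2R\,\mathrm{d}\mu$ from the Gauss equation and Gauss--Bonnet, then $R\geq Ric(\nu,\nu)$ from $Ric\geq 0$ for genus $\geq 1$, and dropping the non-negative terms plus the pinching inequality for genus $0$. You merely spell out the Gauss-equation bookkeeping that the paper leaves implicit; no gaps.
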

\begin{proof}
	Integrating the contracted Gauss equation and using the Gauss-Bonnet theorem, we have
	$$
	\int_{\Sigma} H^2\,\mathrm{d}\mu=16\,\pi\,(1-\operatorname{genus}(\Sigma))+2\,\int_{\Sigma} |\hcirc|^2\,\mathrm{d}\mu+\int_{\Sigma}4\,Ric(\nu,\nu)-2\,R\,\mathrm{d}\mu. 
	$$
	\indent Using that $Ric\geq 0$, we have $R\geq Ric(\nu,\nu)$. This implies \eqref{genus 1}. \\ \indent In the case where $\operatorname{genus}(\Sigma)=0$, we have, using that $Ric \geq 0$, 
	$$
	2\,\int_{\Sigma}R\,\mathrm{d}\mu\geq 16\,\pi-\int_{\Sigma} H^2.
	$$
	In conjunction with \eqref{pinching}, we obtain \eqref{genus 0}. 
\end{proof}
\begin{lem}  \label{small willmore} 
	Suppose that $(M,g)$ is not flat. There exists a sequence $\{\Sigma_i\}_{i=1}^\infty$ of closed, connected surfaces $\Sigma_i\subset M$ with 
	$$
\lim_{i\to\infty} 	\int_{\Sigma_i}H^2\,\mathrm{d}\mu=0.
	$$
\end{lem}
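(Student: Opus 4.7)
The plan is to prove Lemma~\ref{small willmore} by running inverse mean curvature flow (IMCF) from a carefully chosen small initial surface, and using the Ricci-pinching inequalities of Lemma~\ref{willmore est} to drive $\int H^2$ to zero along the flow.

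Since $Ric\geq 0$ and, in dimension three, $Ric\equiv 0$ forces the metric to be flat, non-flatness of $(M,g)$ yields a point $p\in M$ with $R(p)>0$. The classical asymptotic expansion for small geodesic spheres gives
\[
\int_{\partial B_\rho(p)} H^2\,\mathrm{d}\mu = 16\pi - c\,R(p)\,\rho^2 + O(\rho^3)
\]
for a universal constant $c>0$, so the surface $\Sigma_0:=\partial B_\rho(p)$ satisfies $\int_{\Sigma_0} H^2\,\mathrm{d}\mu<16\pi$ for $\rho$ sufficiently small. Replacing $\Sigma_0$ by its outward-minimizing hull (which does not increase $\int H^2$) and invoking the existence result of Mari--Rigoli--Setti, I would obtain a proper weak IMCF $\{E_t\}_{t\geq 0}$ with $\partial^*E_0=\Sigma_0$.

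A standard computation at smooth times yields
\[
\frac{d}{dt}\int_{\partial^*E_t}H^2\,\mathrm{d}\mu \leq -2\int\bigl(|\hcirc|^2+Ric(\nu,\nu)\bigr)\,\mathrm{d}\mu,
\]
and $\int H^2$ is non-increasing across jumps of the weak flow by the outward-minimizing property. Applying Lemma~\ref{willmore est} componentwise converts this into an ordinary differential inequality for $u(t):=\int_{\partial^*E_t}H^2\,\mathrm{d}\mu$: on genus-$0$ components, \eqref{genus 0} gives a contribution to $u'$ bounded above by $-\varepsilon(16\pi-u)$, while on genus-$\geq 1$ components, \eqref{genus 1} gives a contribution bounded above by $-u$. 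In either case, combining with $u(0)<16\pi$ and the a priori bound $u\geq 0$ forces $u(t)\to 0$ along the flow. Extracting $\Sigma_i:=\partial^*E_{t_i}$ along a sequence $t_i$ with $u(t_i)\to 0$ then yields the required surfaces.

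The main obstacle will be carrying out this ODI analysis rigorously in the weak IMCF framework: one must argue componentwise through jump times (where the topology of $\partial^*E_t$ may change), justify that $\int H^2$ is non-increasing across jumps, and interpret the differential inequality correctly at almost every $t$. In the case $\operatorname{AVR}=0$, where the Mari--Rigoli--Setti existence theorem does not directly apply, the conclusion follows from a direct argument: by Bishop--Gromov and the coarea formula, there is a sequence $r_i\to\infty$ with $|\partial B_{r_i}(p)|/r_i^2\to 0$, and Laplacian comparison gives $H\leq 2/r$ on the smooth part of $\partial B_r(p)$, whence $\int_{\partial B_{r_i}(p)}H^2\,\mathrm{d}\mu\to 0$.
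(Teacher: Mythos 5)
Your treatment of the main case is essentially the paper's argument: a small geodesic sphere about a point with $R(p)>0$ so that $\int H^2<16\pi$ (the paper cites Mondino's expansion), replacement by the outward-minimizing hull, which does not increase the Willmore energy by \cite[(1.15)]{huiskenilmanen}, existence of a proper weak flow from \cite{maririgolisetti}, Geroch monotonicity, and the dichotomy of Lemma \ref{willmore est} to force decay. The points you flag as the ``main obstacle'' are exactly what the paper settles by citation rather than by an ODI analysis: the monotonicity is used only in the integrated form $\int_{\Sigma_0}H^2\,\mathrm{d}\mu\geq\int_{\Sigma_t}H^2\,\mathrm{d}\mu+2\int_0^t\int_{\Sigma_s}\bigl(Ric(\nu,\nu)+|\hcirc|^2\bigr)\,\mathrm{d}\mu\,\mathrm{d}s$, valid for the weak flow by \cite[\S5]{huiskenilmanen} together with Heidusch's $W^{2,2}\cap C^{1,1}$ regularity, and instead of arguing componentwise through jumps the paper observes that $\Sigma_t$ is connected for every $t$ (Proposition \ref{toplogy not flat}, i.e.\ $M$ diffeomorphic to $\mathbb{R}^3$, combined with \cite[Lemma 4.2]{huiskenilmanen}), so Lemma \ref{willmore est} applies directly and the contradiction follows from the integral inequality with no differentiation in $t$; your componentwise variant also works, but the connectedness shortcut is cleaner. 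The genuinely different point is the case $\operatorname{AVR}=0$: the paper simply invokes \cite[Theorem 1.1]{agostinianifogagnolomazzierei}, whereas you argue directly via Bishop--Gromov and Laplacian comparison on large geodesic spheres. That route is morally correct (it is how sharpness of the Willmore-type inequality is seen), but as written it has a gap: beyond the injectivity radius, $\partial B_r(p)$ is in general only a Lipschitz set because of the cut locus, so $H$ and $\int_{\partial B_{r_i}}H^2\,\mathrm{d}\mu$ are not classically defined on all of it, and the lemma asks for closed surfaces; you would need a smoothing or replacement argument (level sets of a suitable regularized exhaustion, minimizing hulls, or simply the citation to \cite{agostinianifogagnolomazzierei}) to make this case complete.
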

\begin{proof}
	In the case where $\operatorname{AVR}=0$, the assertion follows from \cite[Theorem 1.1]{agostinianifogagnolomazzierei}. \\ \indent 
	In the case where $\operatorname{AVR}>0$, using that $(M,g)$ is not flat and that $Ric\geq 0$, we see that there is $p\in M$ with $R(p)>0$. Consequently, 
	\begin{align} \label{initial gap} 
	\int_{B_r(p)} H^2\,\mathrm{d}\mu<16\,\pi 
	\end{align} 
	provided that $r>0$ is sufficiently small; see, e.g.,~\cite[Proposition 3.1]{mondino}. Let $\Sigma'\subset M$ be the outward-minimizing hull of $B_r(p)$; see \cite[p.~371]{huiskenilmanen}. Using \cite[(1.15)]{huiskenilmanen}, we see that
	$$
	\int_{\Sigma'}H^2\,\mathrm{d}\mu\leq \int_{B_r(p)}H^2\,\mathrm{d}\mu.
	$$
By the Bishop-Gromov theorem, we have
$$
|B_1(q)|\geq \frac{4\,\pi}{3}\,\operatorname{AVR}
$$	
for every $q\in M$.   Using this and that $\operatorname{AVR}>0$, the work of T.~Coulhon and L.~Saloff-Coste \cite{CoulhonSaloff-Coste} implies that there is $\gamma>0$ such that
$ 
	|\partial \Omega|\geq \gamma\,|\Omega|^{2/3}
$ 
for every precompact, open set $\Omega\subset M$  with smooth boundary $\partial \Omega$;  see the remarks below \cite[Theor\`eme 3]{CoulhonSaloff-Coste} and also \cite{agostinianifogagnolomazzierei,Brendle}.
In conjunction with \cite[Theorem 1.2]{xu2023isoperimetry},  it follows that there exists a proper weak solution $\{E_t\}_{t=0}^\infty$ of inverse mean curvature flow  in the sense of \cite[p.~368]{huiskenilmanen} such that $\Sigma'=\partial^* E_0$. Let $\Sigma_t=\partial^* E_t$. By Proposition \ref{toplogy not flat} and \cite[Lemma 4.2]{huiskenilmanen}, $\Sigma_t$ is connected for every $t\geq 0$. According to the results in \cite[\S5]{huiskenilmanen} and \cite[Korollar 5.6]{Heidusch}, $\Sigma_t$ is of class $W^{2,2}\cap C^{1,1}$ and there holds
	\begin{align} \label{monotonicity}  
	\int_{\Sigma_0} H^2\,\mathrm{d}\mu\geq \int_{\Sigma_t}H^2\,\mathrm{d}\mu+2\,\int_0^t\int_{\Sigma_s}Ric(\nu,\nu)+|\hcirc|^2\,\mathrm{d}\mu\,\mathrm{d}s
	\end{align} 
	for every $t\geq 0$. Clearly, the function \begin{align}  \label{Willmore energy}
	[0,\infty)\to\mathbb{R},\qquad  t\mapsto \int_{\Sigma_t} H^2\,\mathrm{d}\mu
	\end{align} 
	is nonincreasing. We claim that $$
	\lim_{t\to\infty} \int_{\Sigma_t} H^2\,\mathrm{d}\mu=0.
	$$
	Indeed, suppose, for a contradiction, that there is $\delta>0$ such that
	$$
	\int_{\Sigma_t} H^2\,\mathrm{d}\mu\geq \delta 
	$$ 
	for every $t\geq 0$. Shrinking $\delta>0$, if necessary, and using \eqref{initial gap}, we may assume that 
	$$
	\int_{\Sigma_t} H^2\,\mathrm{d}\mu \leq 16\,\pi-\delta 
	$$
	for every $t\geq0$. Using Lemma \ref{willmore est}, we have
	$$
	2\,\int_{\Sigma_s} Ric(\nu,\nu)+|\hcirc|^2\,\mathrm{d}\mu \geq \min\{1,\varepsilon\}\,\delta 
	$$
	for every $t\geq 0$. In conjunction with \eqref{monotonicity}, we see that
	$$
	\int_{\Sigma_t}H^2\,\mathrm{d}\mu<0
	$$ 
	for every $t\geq 16\,\pi\,\min\{1,\varepsilon\}^{-1}\,\delta^{-1}$. The assertion follows from this contradiction.
\end{proof}
\begin{proof}[Proof of Theorem \ref{main result}]
	Let $(M,g)$ be a noncompact, connected, complete  Riemannian three-manifold  that is Ricci-pinched and has positive asymptotic volume ratio $\operatorname{AVR}$. Suppose, for a contradiction, that $(M,g)$ is not flat $\mathbb{R}^3$. By Lemma \ref{small willmore}, there exists a closed, connected surface $\Sigma\subset M$ with
	$$
	\int_{\Sigma} H^2\,\mathrm{d}\mu<16\,\pi\,\operatorname{AVR}.
	$$ 
	As this is incompatible with \cite[Theorem 1.1]{agostinianifogagnolomazzierei}, the assertion follows.
\end{proof}
\begin{appendices}

\section{Superquadratic volume growth}
In this section, we assume that $(M,g)$ is a noncompact, connected, complete Riemannian three-manifold with nonnegative Ricci curvature satisfying \eqref{pinching} for some $\varepsilon>0$. Moreover, we assume that there is $p\in M$ and $\alpha>0$ with 
\begin{align} \label{volume growth}  
\liminf_{r\to\infty} \frac{|B_r(p)|}{r^{1+\alpha}}>0
\end{align} 
and that
\begin{align} \label{non-collapsed} 
\inf_{q\in M}|B_1(q)|>0.
\end{align} 
Note that, by the Bishop-Gromov theorem, $\alpha\leq 2$. Moreover, note that if $\alpha\leq 1$, then $(M,g)$ is parabolic; see \cite[Corollary 2.3]{LiTam}. \\ \indent
The goal of this section is to give an alternative proof, based on inverse mean curvature flow, of the fact that  $(M,g)$ is either flat or has subquadratic volume growth, that is, $\alpha\leq 1$. 
 \\ \indent 
 By the work of T.~Coulhon and L.~Saloff-Coste, using \eqref{volume growth} and \eqref{non-collapsed}, the following isoperimetric inequality
 \begin{align}  \label{isocoulhon}
 |\partial \Omega|\geq \gamma\,\min\{|\Omega|^{2/3},|\Omega|^{\alpha/(1+\alpha)}\}
\end{align} 
holds for some $\gamma>0$ and every precompact open set $\Omega\subset M$ with smooth boundary $\partial \Omega$; see the remarks below \cite[Theor\`eme 3]{CoulhonSaloff-Coste}.
\begin{lem} \label{rapid decay} 
	Suppose that $(M,g)$ is not flat. Then there exists a proper weak solution $\{E_t\}_{t=0}^\infty$ of inverse mean curvature flow  such that, for every $t\geq 0$,
	$$
	\int_{\partial^* E_t} H^2\,\mathrm{d}\mu < 16\,\pi\,e^{-t} . 
	$$
	
\end{lem}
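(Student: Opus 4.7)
The plan is to restart the flow constructed in Lemma \ref{small willmore} at a time where the Willmore energy has dropped below a threshold $\eta := 16\,\pi\,\varepsilon/(1+\varepsilon)\in(0,16\,\pi)$ depending only on the pinching constant; the specific value of $\eta$ is chosen so that in the small-Willmore regime the dichotomy of Lemma \ref{willmore est} collapses to a single decay estimate compatible with the rate $e^{-t}$. First I would invoke the proof of Lemma \ref{small willmore} to produce a proper weak inverse mean curvature flow $\{\hat E_s\}_{s\geq 0}$ --- whose existence follows from \cite{maririgolisetti}, since the volume growth \eqref{volume growth} is sufficient --- along which $\int_{\partial^* \hat E_s} H^2\,\mathrm{d}\mu\to 0$ as $s\to\infty$. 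Choose $s_0\geq 0$ with $\int_{\partial^* \hat E_{s_0}} H^2\,\mathrm{d}\mu<\eta$ and set $E_t:=\hat E_{t+s_0}$. Then $\{E_t\}_{t\geq 0}$ is a proper weak inverse mean curvature flow with $u(0):=\int_{\partial^* E_0} H^2\,\mathrm{d}\mu<\eta<16\,\pi$.

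By the monotonicity \eqref{monotonicity}, $u(t):=\int_{\partial^* E_t} H^2\,\mathrm{d}\mu$ is non-increasing, hence $u(t)<\eta$ for every $t\geq 0$. Each $\partial^* E_t$ is connected by Proposition \ref{toplogy not flat} and \cite[Lemma 4.2]{huiskenilmanen}. The crucial algebraic observation is that the threshold $\eta$ is calibrated precisely so that $\varepsilon(16\,\pi - u)>u$ whenever $u<\eta$. Therefore, applying Lemma \ref{willmore est} to $\partial^* E_t$: if $\operatorname{genus}(\partial^* E_t)\geq 1$, then \eqref{genus 1} gives $2\int(Ric(\nu,\nu)+|\hcirc|^2)\,\mathrm{d}\mu\geq u(t)$ directly; while if $\operatorname{genus}(\partial^* E_t)=0$, then \eqref{genus 0} combined with $|\hcirc|^2\geq 0$ gives $2\int (Ric(\nu,\nu)+|\hcirc|^2)\,\mathrm{d}\mu\geq \varepsilon(16\,\pi-u(t))>u(t)$. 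In either case,
\begin{align*}
2\int_{\partial^* E_t}\bigl(Ric(\nu,\nu)+|\hcirc|^2\bigr)\,\mathrm{d}\mu\,\geq\, u(t)\qquad\text{for every } t\geq 0.
\end{align*}

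Substituting this inequality into \eqref{monotonicity} applied between arbitrary times $t_1\leq t_2$ (i.e.~to the shifted flow starting at $t_1$) yields $u(t_1)\geq u(t_2)+\int_{t_1}^{t_2} u(s)\,\mathrm{d}s$. Since $u$ is non-increasing, $\int_{t_1}^{t_2} u(s)\,\mathrm{d}s\geq (t_2-t_1)\,u(t_2)$, whence $u(t_2)\leq u(t_1)/(1+(t_2-t_1))$. Iterating this bound along a uniform partition $0=\sigma_0<\sigma_1<\dots<\sigma_N=t$ of mesh $h=t/N$ and letting $N\to\infty$ produces $u(t)\leq u(0)\,(1+t/N)^{-N}\to u(0)\,e^{-t}$, so that $u(t)\leq u(0)\,e^{-t}<\eta\,e^{-t}<16\,\pi\,e^{-t}$, as required. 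The main obstacle is the dichotomy in Lemma \ref{willmore est}: in the genus $0$ case the right-hand side of \eqref{genus 0} has the form $\varepsilon(16\,\pi - u)$ rather than $u$, so a naive comparison does not yield the rate $e^{-t}$ for arbitrary initial Willmore energy. The calibration $\eta = 16\,\pi\,\varepsilon/(1+\varepsilon)$ is precisely what converts the genus $0$ bound into one that dominates the genus $\geq 1$ bound once $u$ has dropped below $\eta$, and a time shift based on Lemma \ref{small willmore} is what lets us assume this smallness from the outset.
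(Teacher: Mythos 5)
Your proposal is correct and follows essentially the same route as the paper: your calibration $u<16\,\pi\,\varepsilon/(1+\varepsilon)$ is exactly the paper's condition \eqref{varepsilon Willmore}, and combining Lemma \ref{willmore est} with the monotonicity \eqref{monotonicity} to get $u(t_1)\geq u(t_2)+\int_{t_1}^{t_2}u(s)\,\mathrm{d}s$ and then a Gronwall-type iteration is precisely the paper's argument. The only cosmetic difference is that you time-shift the flow from the proof of Lemma \ref{small willmore}, whereas the paper starts a fresh weak solution from an outward-minimizing surface with Willmore energy below that threshold; both are fine.
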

\begin{proof}
	By Lemma \ref{small willmore}, there is a closed, connected surface $\Sigma\subset M$ satisfying 
	$$
	\varepsilon\,(16\,\pi-\int_{\Sigma}H^2\,\mathrm{d}\mu)\geq \int_{\Sigma} H^2\,\mathrm{d}\mu. 
	$$ 
Replacing $\Sigma$ by its outward-minimizing hull and using \cite[(1.15)]{huiskenilmanen}, we may assume that $\Sigma$ is outward-minimizing. By \cite[Theorem 1.2]{xu2023isoperimetry}, using \eqref{isocoulhon},	there is a proper weak solution $\{E_t\}_{t=0}^\infty$ of inverse mean curvature flow with ${\partial^*} E_0=\Sigma$. Let $\Sigma_t=\partial^*E_t$. As in the proof of Lemma \ref{small willmore}, we see that the function \eqref{Willmore energy} is nonincreasing so that, for every $t\geq 0$,  
	\begin{align} \label{varepsilon Willmore} 
\varepsilon\,(16\,\pi-\int_{\Sigma_t}H^2\,\mathrm{d}\mu)\geq \int_{\Sigma_t} H^2\,\mathrm{d}\mu. 
\end{align} 
 Again, as in the proof of Lemma \ref{small willmore}, using \eqref{varepsilon Willmore} and Lemma \ref{willmore est}, we see that
	$$
		\int_{\Sigma} H^2\,\mathrm{d}\mu\geq \int_{\Sigma_t}H^2\,\mathrm{d}\mu+\int_0^t\int_{\Sigma_s}H^2\,\mathrm{d}\mu\,\mathrm{d}s.
	$$
 	Using that
	$$\int_{\Sigma} H^2\,\mathrm{d}\mu<16\,\pi, $$
the assertion follows. 
\end{proof}
\begin{RK}
	The existence of a proper weak solution of inverse mean curvature flow as asserted in Lemma \ref{rapid decay} would also follow from \cite[Remark 1.6 and Theorem 1.7]{maririgolisetti}, even without assuming \eqref{non-collapsed}, but replacing \eqref{volume growth} by 
	$$
	0<\lim_{r\to\infty} \frac{|B_r(p)|}{r^{1+\alpha}}<\infty.
	$$
	 However, we were not able to verify all arguments
	in the proof of \cite[Theorem 3.6]{maririgolisetti}.
\end{RK}
\begin{thm} \label{main result 2} 
	Let $(M,g)$ be a noncompact, connected, complete Riemannian three-manifold that is Ricci-pinched and satisfies \eqref{volume growth} for some $\alpha>0$ and \eqref{non-collapsed}. If $(M,g)$ is not flat, then $\alpha\leq 1$.
\end{thm}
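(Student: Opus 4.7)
I would argue by contradiction along the lines of Theorem \ref{main result}: assuming $(M,g)$ is not flat and $\alpha>1$, Lemma \ref{rapid decay} provides a proper weak solution $\{E_t\}_{t=0}^\infty$ of inverse mean curvature flow satisfying $\int_{\Sigma_t} H^2\,\mathrm{d}\mu < 16\,\pi\,e^{-t}$ for every $t\geq 0$, where $\Sigma_t = \partial^* E_t$. Under IMCF the area evolves as $|\Sigma_t|=e^t|\Sigma_0|$.

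The main intermediate estimate will be that the enclosed volume satisfies $|E_t|\geq c_2\,e^{2t}$ for $t$ sufficiently large. This follows from two successive applications of Cauchy--Schwarz: first, $\int_{\Sigma_t}H\,\mathrm{d}\mu \leq \bigl(|\Sigma_t|\,\int_{\Sigma_t}H^2\,\mathrm{d}\mu\bigr)^{1/2} < 4\sqrt{\pi|\Sigma_0|}$, where the Willmore decay from Lemma \ref{rapid decay} exactly cancels the exponential growth of $|\Sigma_t|$; second, $|\Sigma_t|^2 \leq \bigl(\int_{\Sigma_t}H\,\mathrm{d}\mu\bigr)\bigl(\int_{\Sigma_t}H^{-1}\,\mathrm{d}\mu\bigr)$. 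Combining gives $\int_{\Sigma_t}H^{-1}\,\mathrm{d}\mu \geq c_1\,e^{2t}$, and since $\frac{\mathrm{d}}{\mathrm{d}t}|E_t| = \int_{\Sigma_t} H^{-1}\,\mathrm{d}\mu$ in the sense of the weak formulation, integration produces the claimed lower bound on $|E_t|$.

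To derive a contradiction, I would invoke an asymptotic isoperimetric inequality of the form $|\partial^*\Omega| \geq C_\alpha\,|\Omega|^{\alpha/(1+\alpha)}$ for bounded sets $\Omega\subset M$ with $|\Omega|$ sufficiently large. Such an inequality is a standard consequence of the hypothesis \eqref{volume growth} combined with $Ric\geq 0$: the Bishop--Gromov theorem yields uniform volume doubling, Buser's inequality yields a scale-invariant $L^2$-Poincaré inequality, and the Coulhon--Saloff-Coste machinery then produces Sobolev (and hence isoperimetric) inequalities with the exponent dictated by the asymptotic dimension $1+\alpha$. Applied to $E_t$, this gives $e^t|\Sigma_0| = |\Sigma_t| \geq C_\alpha\,(c_2\,e^{2t})^{\alpha/(1+\alpha)}$, which upon letting $t\to\infty$ forces $1\geq 2\alpha/(1+\alpha)$, i.e., $\alpha\leq 1$, contradicting the assumption $\alpha>1$.

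The hard part will be to justify the isoperimetric inequality with the sharp large-scale exponent $\alpha/(1+\alpha)$ in a form applicable to the reduced-boundary sets produced by weak IMCF; concretely, I would have to verify that the constant $C_\alpha$ and the threshold on $|\Omega|$ above which the inequality applies are both controlled purely by the geometry of $(M,g)$. Once this input is in place, the remainder of the argument is a mechanical combination of Lemma \ref{rapid decay} with the Cauchy--Schwarz computation outlined above.
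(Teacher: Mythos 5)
Your first two steps coincide with the paper's own argument: the Cauchy--Schwarz/H\"older manipulation is exactly \eqref{hoelder estimate}, which together with Lemma \ref{rapid decay} and the exponential area growth \eqref{exponential area growth} gives $\int_{\Sigma_t}H^{-1}\,\mathrm{d}\mu\gtrsim e^{2t}$ as in \eqref{volume speed estimate}, and integrating the weak flow gives $|E_t|\gtrsim e^{2t}$, i.e.\ \eqref{volume area comparison}. The divergence is in the endgame, and the ingredient you yourself flag as ``the hard part'' is a genuine gap, not a routine verification. The Coulhon--Saloff-Coste machinery (doubling plus Buser/Poincar\'e) produces an isoperimetric inequality with exponent $\alpha/(1+\alpha)$ only from a \emph{uniform} lower volume bound $\inf_{x\in M}|B_r(x)|\geq c\,r^{1+\alpha}$ at the relevant scales. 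The hypothesis \eqref{volume growth} is a pointed condition at $p$: Bishop--Gromov transfers it to balls $B_r(x)$ only when $r\gtrsim d(x,p)$, while for $r\ll d(x,p)$ one merely gets $|B_r(x)|\gtrsim r^{3}\,d(x,p)^{\alpha-2}$, which for $\alpha<2$ is far weaker than $r^{1+\alpha}$. Correspondingly, the cheap route of applying the relative ($L^1$-Poincar\'e) isoperimetric inequality on a single ball $B_r(p)$ with $r\approx|E_t|^{1/(1+\alpha)}$ only controls $|E_t\cap B_r(p)|$, not $|E_t|$; since nothing bounds the diameter of $E_t$ by $|E_t|^{1/(1+\alpha)}$, an a priori elongated $E_t$ escapes this argument. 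So the ``asymptotic isoperimetric inequality'' you invoke is not standard under \eqref{volume growth}; you would either have to prove an anchored version from scratch or exploit extra structure of the sets $E_t$. A further warning sign is that your argument never uses the finiteness of the limit in \eqref{volume growth}, so you are implicitly claiming a strictly stronger theorem than the one stated.

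The paper avoids this issue by using precisely such extra structure, coming from the Mari--Rigoli--Setti fake distance $\rho$ \cite{maririgolisetti} underlying the weak flow: its sublevel sets are outward-minimizing, satisfy $|\partial\{\rho\leq r\}|=4\pi r^2$, and are sandwiched between geodesic balls, $B_r(p)\subset\{\rho\leq r\}\subset B_{\delta^{-1}r^{2/\alpha}}(p)$, the latter inclusion being where the lower volume growth enters quantitatively. Sandwiching $E_t$ between sublevel sets of $\rho$ then gives $|E_t|\leq|B_{\delta^{-1-2/\alpha}e^{t/\alpha}}(p)|$ and $|\Sigma_t|\geq 4\pi\delta^2e^t$, so \eqref{volume area comparison} yields $\liminf_{r\to\infty}|B_r(p)|/r^{2\alpha}>0$, which contradicts the \emph{upper} bound in \eqref{volume growth} unless $2\alpha\leq 1+\alpha$. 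In effect, the fake-distance estimates play the role you wanted the isoperimetric inequality to play, but they are already available in the literature for the specific exhaustion produced by weak inverse mean curvature flow, whereas your proposed lemma is an open-ended task. To repair your write-up you should either substitute this fake-distance argument or supply a complete proof of the anchored isoperimetric inequality you need.
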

\begin{proof}
	Let $\{E_t\}_{t=0}^\infty$ be a proper weak solution of  inverse mean curvature flow as in Lemma \ref{rapid decay}. Let $\Sigma_t=\partial^* E_t$. By  \cite[Lemma 5.1]{huiskenilmanen}, there holds $H>0$ $\mathrm{d}\mu$-almost everywhere on $\Sigma_t$ and $H^{-1}\in L^1(\Sigma_t)$ for almost every $t\geq 0$.  Using Hölder's inequality, we have, for almost every $t\geq 0$, 
	\begin{align} \label{hoelder estimate}
	|\Sigma_t|^{3/2}\leq \int_{\Sigma_t}H^{-1}\,\mathrm{d}\mu\,\left(\int_{\Sigma_t} H^2\,\mathrm{d}\mu \right)^{1/2}.
	\end{align} 
	Recall from \cite[Exponential Growth Lemma 1.6]{huiskenilmanen}, that \begin{align} \label{exponential area growth} |\Sigma_t|=|\Sigma_0|\,e^t. \end{align}
 In conjunction with Lemma \ref{rapid decay} and \eqref{hoelder estimate}, we obtain that, for almost every $t\geq 0$,
	\begin{align} \label{volume speed estimate}  
	\int_{\Sigma_t} H^{-1}\,\mathrm{d}\mu\geq \sqrt{\frac{|\Sigma_0|^3}{16\,\pi}}\,e^{2\,t}.
	\end{align} 
Arguing as in \cite[\S5]{huiskenilmanen}, we see that
	$$
	|E_t|\geq |E_0|+\int_0^t\,\int_{\Sigma_s}H^{-1}\,\mathrm{d}\mu\,\mathrm{d}s 
	$$
	for every $t\geq 0$. In conjunction with \eqref{exponential area growth} and \eqref{volume speed estimate},	it follows that 
	\begin{align} \label{volume area comparison}  
	\liminf_{t\to\infty} |\Sigma_t|^{-2}\,|E_t|>0.
	\end{align} 
By contrast, using \eqref{isocoulhon}, we have
$$
|\Sigma_t|^{-2}\,|E_t|\leq \gamma ^{-2}\,|E_t|^{(1-\alpha)/(1+\alpha)}.
$$
As this is incompatible with \eqref{volume area comparison} unless $\alpha\leq 1$, the assertion follows. 
\end{proof} 

\end{appendices}
\begin{bibdiv}
	\begin{biblist}
		
		\bib{agostinianifogagnolomazzierei}{article}{
			author={Agostiniani, Virginia},
			author={Fogagnolo, Mattia},
			author={Mazzieri, Lorenzo},
			title={Sharp geometric inequalities for closed hypersurfaces in
				manifolds with nonnegative {R}icci curvature},
			date={2020},
			ISSN={0020-9910},
			journal={Invent. Math.},
			volume={222},
			number={3},
			pages={1033\ndash 1101},
			url={https://doi.org/10.1007/s00222-020-00985-4},
			review={\MR{4169055}},
		}
		
		\bib{Brendle}{article}{
			author={Brendle, Simon},
			title={Sobolev inequalities in manifolds with nonnegative curvature},
			date={2023},
			ISSN={0010-3640,1097-0312},
			journal={Comm. Pure Appl. Math.},
			volume={76},
			number={9},
			pages={2192\ndash 2218},
			review={\MR{4612577}},
		}
		
		\bib{chenzhu}{article}{
			author={Chen, Bing-Long},
			author={Zhu, Xi-Ping},
			title={Complete {R}iemannian manifolds with pointwise pinched
				curvature},
			date={2000},
			ISSN={0020-9910},
			journal={Invent. Math.},
			volume={140},
			number={2},
			pages={423\ndash 452},
			url={https://doi.org/10.1007/s002220000061},
			review={\MR{1757002}},
		}
		
		\bib{chowluni}{book}{
			author={Chow, Bennett},
			author={Lu, Peng},
			author={Ni, Lei},
			title={Hamilton's {R}icci flow},
			series={Graduate Studies in Mathematics},
			publisher={American Mathematical Society, Providence, RI; Science Press
				Beijing, New York},
			date={2006},
			volume={77},
			ISBN={978-0-8218-4231-7; 0-8218-4231-5},
			url={https://doi.org/10.1090/gsm/077},
			review={\MR{2274812}},
		}
		
		\bib{CoulhonSaloff-Coste}{article}{
			author={Coulhon, Thierry},
			author={Saloff-Coste, Laurent},
			title={Isop\'{e}rim\'{e}trie pour les groupes et les vari\'{e}t\'{e}s},
			date={1993},
			ISSN={0213-2230},
			journal={Rev. Mat. Iberoamericana},
			volume={9},
			number={2},
			pages={293\ndash 314},
			url={https://doi.org/10.4171/RMI/138},
			review={\MR{1232845}},
		}
		
		\bib{Deruelle}{article}{
			author={Deruelle, Alix},
			title={Smoothing out positively curved metric cones by {R}icci
				expanders},
			date={2016},
			ISSN={1016-443X},
			journal={Geom. Funct. Anal.},
			volume={26},
			number={1},
			pages={188\ndash 249},
			url={https://doi.org/10.1007/s00039-016-0360-0},
			review={\MR{3494489}},
		}
		
		\bib{deruelleschulzesimon}{article}{
			author={Deruelle, Alix},
			author={Schulze, Felix},
			author={Simon, Miles},
			title={Initial stability estimates for {R}icci flow and three
				dimensional {R}icci-pinched manifolds},
			date={2022},
			journal={arXiv preprint arXiv:2203.15313},
			note={available at \url{https://arxiv.org/abs/2203.15313}},
		}
		
		\bib{hamilton}{article}{
			author={Hamilton, Richard~S.},
			title={Three-manifolds with positive {R}icci curvature},
			date={1982},
			ISSN={0022-040X},
			journal={J. Differential Geometry},
			volume={17},
			number={2},
			pages={255\ndash 306},
			url={http://projecteuclid.org/euclid.jdg/1214436922},
			review={\MR{664497}},
		}
		
		\bib{Heidusch}{thesis}{
			author={Heidusch, Mirjam},
			title={Zur {R}egularit{\"a}t des inversen mittleren
				{K}r{\"u}mmungsflusses},
			type={Ph.D. Thesis},
			date={2001},
			note={available at
				\url{https://publikationen.uni-tuebingen.de/xmlui/handle/10900/48198}},
		}
		
		\bib{huiskenilmanen}{article}{
			author={Huisken, Gerhard},
			author={Ilmanen, Tom},
			title={The inverse mean curvature flow and the {R}iemannian {P}enrose
				inequality},
			date={2001},
			ISSN={0022-040X},
			journal={J. Differential Geom.},
			volume={59},
			number={3},
			pages={353\ndash 437},
			url={http://projecteuclid.org/euclid.jdg/1090349447},
			review={\MR{1916951}},
		}
		
		\bib{leetopping}{article}{
			author={Lee, Man-Chun},
			author={Topping, Peter~M},
			title={Three-manifolds with non-negatively pinched {R}icci curvature},
			date={2022},
			journal={arXiv preprint arXiv:2204.00504},
			note={available at \url{https://arxiv.org/abs/2204.00504}},
		}
		
		\bib{LiTam}{article}{
			author={Li, Peter},
			author={Tam, Luen-Fai},
			title={Complete surfaces with finite total curvature},
			date={1991},
			ISSN={0022-040X,1945-743X},
			journal={J. Differential Geom.},
			volume={33},
			number={1},
			pages={139\ndash 168},
			url={http://projecteuclid.org/euclid.jdg/1214446033},
			review={\MR{1085138}},
		}
		
		\bib{lott}{article}{
			author={Lott, John},
			title={On 3-manifolds with pointwise pinched nonnegative {R}icci
				curvature},
			date={2023},
			journal={Mathematische Annalen},
			pages={1\ndash 20},
		}
		
		\bib{maririgolisetti}{article}{
			author={Mari, Luciano},
			author={Rigoli, Marco},
			author={Setti, Alberto~G.},
			title={On the {$1/H$}-flow by {$p$}-{L}aplace approximation: new
				estimates via fake distances under {R}icci lower bounds},
			date={2022},
			ISSN={0002-9327},
			journal={Amer. J. Math.},
			volume={144},
			number={3},
			pages={779\ndash 849},
			url={https://doi.org/10.1353/ajm.2022.0016},
			review={\MR{4436145}},
		}
		
		\bib{mondino}{article}{
			author={Mondino, Andrea},
			title={Some results about the existence of critical points for the
				{W}illmore functional},
			date={2010},
			ISSN={0025-5874},
			journal={Math. Z.},
			volume={266},
			number={3},
			pages={583\ndash 622},
			url={https://doi.org/10.1007/s00209-009-0588-6},
			review={\MR{2719422}},
		}
		
		\bib{SchoenYau}{incollection}{
			author={Schoen, Richard},
			author={Yau, Shing~Tung},
			title={Complete three-dimensional manifolds with positive {R}icci
				curvature and scalar curvature},
			date={1982},
			booktitle={Seminar on {D}ifferential {G}eometry},
			series={Ann. of Math. Stud.},
			volume={102},
			publisher={Princeton Univ. Press, Princeton, N.J.},
			pages={209\ndash 228},
			review={\MR{645740}},
		}
		
		\bib{SchulzeSimon}{article}{
			author={Schulze, Felix},
			author={Simon, Miles},
			title={Expanding solitons with non-negative curvature operator coming
				out of cones},
			date={2013},
			ISSN={0025-5874},
			journal={Math. Z.},
			volume={275},
			number={1-2},
			pages={625\ndash 639},
			url={https://doi.org/10.1007/s00209-013-1150-0},
			review={\MR{3101823}},
		}
		
		\bib{wang}{article}{
			author={Wang, Xiaodong},
			title={Remark on an inequality for closed hypersurfaces in complete
				manifolds with nonnegative {R}icci curvature},
			date={2023},
			ISSN={0240-2963,2258-7519},
			journal={Ann. Fac. Sci. Toulouse Math. (6)},
			volume={32},
			number={1},
			pages={173\ndash 178},
			review={\MR{4574743}},
		}
		
		\bib{xu2023isoperimetry}{article}{
			author={Xu, Kai},
			title={Isoperimetry and the properness of weak inverse mean curvature
				flow},
			date={2023},
			journal={arXiv preprint arXiv:2307.00725},
			note={available at \url{https://arxiv.org/abs/2307.00725}},
		}
		
		\bib{zhu}{article}{
			author={Zhu, Shun-Hui},
			title={On open three manifolds of quasi-positive {R}icci curvature},
			date={1994},
			ISSN={0002-9939},
			journal={Proc. Amer. Math. Soc.},
			volume={120},
			number={2},
			pages={569\ndash 572},
			url={https://doi.org/10.2307/2159897},
			review={\MR{1166362}},
		}
		
	\end{biblist}
\end{bibdiv}

\end{document}